\documentclass{adamjoucc}

\usepackage{hyperref}
\usepackage{amssymb}
\usepackage[capitalize]{cleveref}
\usepackage{relsize}

\DeclareMathOperator{\Cay}{Cay}
\DeclareMathOperator{\PSL}{PSL}
\DeclareMathOperator{\Stab}{Stab}

\newcommand{\NN}{\mathbb{N}}
\newcommand{\ZZ}{\mathbb{Z}}

\newcommand{\1}{\mathbf{1}}
\newcommand{\cartprod}{\mathbin{\raise0.1pt\hbox{\smaller[2]\hskip0.1pt$\square$\hskip0.1pt}}}
\newcommand{\caut}{\mathcal{C}}
\newcommand{\compose}{\circ}
\newcommand{\inner}[1]{\widehat{#1}}
\newcommand{\iso}{\cong}
\newcommand{\ltrans}{\mathcal{L}}

\newcommand{\pref}[1]{(\ref{#1})}

\newcommand{\fullcref}[2]{\cref{#1}\pref{#1-#2}}

\newcommand{\csee}[1]{(see \cref{#1})}
\newcommand{\fullcsee}[2]{(see \fullcref{#1}{#2})}

\newcommand{\MR}[1]{\href{https://mathscinet.ams.org/mathscinet-getitem?mr=#1}{MR\,#1}}
\newcommand{\doi}[1]{\href{https://doi.org/#1}{doi:#1}}

\theoremstyle{plain} 
\newtheorem{cor}[equation]{Corollary}
\newtheorem{lem}[equation]{Lemma}
\newtheorem{prop}[equation]{Proposition}
\newtheorem{thm}[equation]{Theorem}

\Crefname{cor}{Corollary}{Corollaries}
\Crefname{lem}{Lemma}{Lemmas}
\Crefname{prop}{Proposition}{Propositions}
\Crefname{thm}{Theorem}{Theorems}

\theoremstyle{definition}
\newtheorem*{ack}{Acknowledgments}
\newtheorem{defn}[equation]{Definition}
\newtheorem{eg}[equation]{Example}
\newtheorem{notation}[equation]{Notation}
\newtheorem{rem}[equation]{Remark}
\newtheorem{rems}[equation]{Remarks}

\Crefname{defn}{Definition}{Definitions}

\AddToHook{env/cor/begin}{\crefalias{equation}{cor}}
\AddToHook{env/lem/begin}{\crefalias{equation}{lem}}
\AddToHook{env/prop/begin}{\crefalias{equation}{prop}}
\AddToHook{env/thm/begin}{\crefalias{equation}{thm}}
\AddToHook{env/defn/begin}{\crefalias{equation}{defn}}
\AddToHook{env/eg/begin}{\crefalias{equation}{eg}}
\AddToHook{env/notation/begin}{\crefalias{equation}{notation}}
\AddToHook{env/rem/begin}{\crefalias{equation}{rem}}
\AddToHook{env/rems/begin}{\crefalias{equation}{rems}}

\newcounter{case}

\renewcommand{\thecase}{\arabic{case}}
\crefformat{case}{Case~#2#1#3}
\Crefname{case}{Case}{Cases}
\crefname{case}{case}{cases}

\newcounter{step}

\renewcommand{\thestep}{\arabic{step}}
\crefformat{step}{Step~#2#1#3}
\Crefname{step}{Step}{Steps}
\crefname{step}{case}{cases}

\makeatletter
\newcommand{\noprelistbreak}{\smallskip\@nobreaktrue\nopagebreak} 
\makeatother

\makeatletter
\@finaltrue
\renewenvironment{frontmatter}
{\thispagestyle{plain}}
{\vskip 20pt%
\blfootnote{\raggedright \ifnum\@authorcount=1\textit{E-mail address:}\else\textit{E-mail addresses:}\fi ~\@emails}%
\if@proofsline\global\linenumbers\fi%
}

\renewenvironment{abstract}
{\hrule height 0.25pt
\vskip 5pt
\noindent \textbf{Abstract}
\vskip 5pt
}
{
\vskip 5pt
\noindent \textit{\small Keywords:~\@keywords}

\vskip 3pt
\noindent \textit{\small Math.\ Subj.\ Class. (2020): \@msc}
\vskip 5pt
\hrule height 0.25pt
}
\def\@oddrunninghead{Composing group automorphisms with colour-preserving automorphisms}
\def\@evenrunninghead{\@oddrunninghead}
\makeatother
\usepackage{enumitem}
    \setlist{itemsep=0pt}
	\setlist[enumerate, 1]{label =\textup{(\arabic*)}, ref=\arabic*, topsep=\smallskipamount}
	\setlist[itemize, 1]{topsep=\smallskipamount}
	\setlist[enumerate, 2]{label =\textup{(\alph*)}, ref=\theenumi\alph*, topsep=0pt}
	\setlist[itemize, 2]{label=$\circ$, topsep=0pt}

\newcommand{\defit}[1]{\emph{\bfseries #1}} 

\begin{document}

\begin{frontmatter}   

\titledata{Composing group automorphisms with
	\\  colour-preserving automorphisms 
	\\ of Cayley graphs}{}

\authordata{Dave Witte Morris}
{Department of Mathematics and Computer Science, 
University of Lethbridge, 
\\ 
4401 University Drive, 
Lethbridge, Alberta, T1K~3M4, Canada}
{dmorris@deductivepress.ca, https://deductivepress.ca/dmorris}
{}

\keywords{Cayley graph, 
	colour-preserving automorphism, 
	colour-permuting automorphism, 
	graph isomorphism, 
	Cayley graph of odd order,
	Cayley graph of square-free order}

\msc{05C25, 
    05C60 
    }

\begin{abstract}
We show that if $\varphi$ is a colour-permuting automorphism of a connected, finite Cayley graph, and the order of the Cayley graph is either odd or square-free, then $\varphi$~is the composition of a group automorphism and a colour-preserving graph automorphism.
Some analogous results are also established for isomorphisms between two different Cayley graphs.
\end{abstract}

\end{frontmatter}

\section{Introduction}

\begin{defn}[cf.\ {\cite[Defns.~1.1 and~1.2]{HKMM}}]
Let $S$ be a subset of a group~$G$. If $S$ is \defit{symmetric} (which means $s^{-1} \in S$ for all $s \in S$), then the \defit{Cayley graph} $\Cay(G; S)$ is the (undirected) graph whose vertices are the elements of~$G$, with an edge joining $g$ and~$gs$ for all $g \in G$ and $s \in S$.
This graph has a natural edge-colouring: the edge joining $g$ and~$gs$ is coloured with the set $\{s^{\pm1}\}$.
	\begin{itemize}
	\item An automorphism $\varphi$ of $\Cay(G; S)$ is \defit{colour-preserving} if it preserves the colours of the edges. (In other words, for all $g \in G$ and $s \in S$, we have $\varphi(gs) \in \{\varphi(g) \, s^{\pm1}\}$.) 
	\item An isomorphism $\varphi$ from one Cayley graph $\Cay(G_1; S_1)$ to another Cayley graph $\Cay(G_2; S_2)$ is \defit{chromatic} if it respects the edge-colourings. More concretely, there is a bijection $\pi \colon S_1 \to S_2$, such that $\varphi(gs) \in \{\varphi(g) \, \pi(s)^{\pm1} \}$ for all $g \in G_1$ and $s \in S_1$.
	\end{itemize}
\end{defn}

It is easy to see that if $\alpha \colon G_1 \to G_2$ is a group isomorphism, then it is a chromatic isomorphism from $\Cay(G_1; S_1)$ to $\Cay \bigl( G_2; \alpha(S_1) \bigr)$, for any symmetric subset~$S_1$ of~$G_1$ (cf.\ \cite[p.~190]{HKMM}). It is also obvious that every colour-preserving automorphism is a chromatic isomorphism. The following result shows in the odd-order case that every chromatic isomorphism is a combination of these two obvious types.

\begin{thm} \label{OddChromIso}
If\/ $\Cay(G_1; S_1)$ and\/ $\Cay(G_2; S_2)$ are connected Cayley graphs of odd order, then every chromatic isomorphism from\/ $\Cay(G_1; S_1)$ to\/ $\Cay(G_2; S_2)$ is the composition of a colour-preserving automorphism of\/ $\Cay(G_1; S_1)$ and a group isomorphism from $G_1$ to~$G_2$.
\end{thm}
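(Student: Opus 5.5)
The plan is to translate the statement into one about regular subgroups of the colour-preserving automorphism group, and then to use a conjugacy theorem for Hall subgroups.

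For $i = 1,2$, let $\caut_i$ be the group of colour-preserving automorphisms of $\Cay(G_i; S_i)$, and let $\ltrans_i \iso G_i$ be the group of left translations $x \mapsto gx$. Each $\ltrans_i$ is a regular subgroup of $\caut_i$.

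A chromatic isomorphism carries colour classes of edges bijectively to colour classes. Hence conjugation by $\varphi$ maps colour-preserving automorphisms of $\Cay(G_2;S_2)$ to colour-preserving automorphisms of $\Cay(G_1;S_1)$, so that $\varphi^{-1} \caut_2 \varphi = \caut_1$. In particular, $H := \varphi^{-1} \ltrans_2 \varphi$ is a regular subgroup of $\caut_1$ isomorphic to $G_2$.

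\emph{Reduction.} Suppose there is $c \in \caut_1$ with $c\, \ltrans_1 c^{-1} = H$. Composing $c$ with a left translation (which is still in $\caut_1$), I may also assume that $\varphi c$ sends $e$ to~$e$. Set $\alpha = \varphi \compose c$. Then $\alpha$ conjugates $\ltrans_1$ onto $\ltrans_2$, say $\alpha \lambda_g \alpha^{-1} = \lambda_{\beta(g)}$, where $\beta \colon G_1 \to G_2$ is a group isomorphism. Evaluating at~$e$ gives $\alpha(g) = \alpha \lambda_g(e) = \lambda_{\beta(g)} \alpha(e) = \beta(g)$. So $\alpha = \beta$ is a group isomorphism, and $\varphi = \alpha \compose c^{-1}$ has the required form. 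It therefore suffices to show that $\ltrans_1$ and $H$ are conjugate in~$\caut_1$.

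\emph{Structure of the vertex stabilizer.} Let $\psi \in \caut_1$ fix~$e$. Since $|G_1|$ is odd, $s \neq s^{-1}$ for every $s \in S_1$. Thus $\psi$ either fixes both $s$-neighbours $s^{\pm1}$ of~$e$ or swaps them, so $\psi^2$ fixes every neighbour of~$e$. Connectivity and induction along paths then give $\psi^2 = 1$. Hence the stabilizer $(\caut_1)_e$ is an elementary abelian $2$-group, of order $2^k$ say. Since $\ltrans_1$ acts regularly, $|\caut_1| = |G_1| \cdot 2^k$ with $|G_1|$ odd. Therefore both $\ltrans_1$ and $H$ are Hall $2'$-subgroups of~$\caut_1$.

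\emph{Main obstacle: conjugacy of these Hall subgroups.} This is the step I expect to be hardest. Hall subgroups need not be conjugate in a nonsolvable group, and $\caut_1 = \ltrans_1 (\caut_1)_e$ is a product of an odd-order group and a $2$-group, which is not obviously solvable. My first attempt would be to invoke the theorem (Gross) that Hall subgroups of odd order in any finite group are conjugate; this uses Feit--Thompson and the classification.

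If a more elementary route is wanted, I would instead try to prove directly that $\caut_1$ is solvable, or find a normal subgroup on which to induct. One candidate is the subgroup generated by colour-preserving automorphisms that preserve the orientation of every $s$-cycle; this should be $\ltrans_1$ itself, or close to it. One could then apply Schur--Zassenhaus to the quotient by the $2$-part. Either way, once conjugacy is established, the reduction above completes the proof.
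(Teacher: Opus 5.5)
Your reduction and overall strategy are the paper's. The paper likewise shows that $\ltrans_1$ and $\varphi^{-1}\ltrans_2\varphi$ are 2-complements in $\caut$. It conjugates one onto the other using Arad--Ward's result that all 2-complements of a finite group are conjugate, which is the special case of Gross's theorem that you need. It then normalizes by a left translation exactly as you do.

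\emph{The stabilizer step is wrong.} You claim that the vertex stabilizer is elementary abelian, and this is false. The observation ``if $\psi$ fixes $v$ then $\psi^2$ fixes every neighbour of $v$'' does not propagate the way you say. The map $\psi$ need not fix a neighbour $w$ of $e$, so applying the observation to $\psi^2$ at $w$ only tells you that $\psi^4$ fixes the neighbours of $w$.

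The conclusion $\psi^2 = 1$ already fails in odd order. Take $\Cay(F_{21}; \{a^{\pm1}, b^{\pm1}\})$ with $a,b$ of order $3$ generating different subgroups; this is the non-CCA graph of \cref{OddSquarefree}.
\begin{itemize}
\item The left cosets of $\langle a\rangle$ and $\langle b\rangle$ are the points and lines of a Fano plane, with incidence given by nonempty intersection.
\item The vertices are the flags of this plane, and the colour-preserving automorphisms are exactly its collineations.
\item Hence $\caut \iso \PSL(2,7)$, and the vertex stabilizer is dihedral of order~$8$.
\end{itemize}

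What is true, and all you actually use, is that $\psi^{2^k}$ fixes every vertex within distance $k$ of $e$. So $\psi^{2^d} = 1$, where $d$ is the diameter. Hence the stabilizer is a $2$-group, and $\ltrans_1$ and $\varphi^{-1}\ltrans_2\varphi$ are Hall $2'$-subgroups of $\caut_1$. With that repair, and with Gross's theorem as your conjugacy step, your proof is complete and agrees with the paper's.

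\emph{Your elementary fallback cannot work.} The same example rules it out. There $\caut_1$ is the simple group $\PSL(2,7)$, in which $\ltrans_1 \iso F_{21}$ is not normal. So $\caut_1$ need not be solvable, and it may have no proper nontrivial normal subgroup to induct on or to quotient by before applying Schur--Zassenhaus. This is why the paper, like your first option, appeals to a conjugacy theorem for 2-complements that holds in arbitrary finite groups.
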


\begin{rems}
\leavevmode\noprelistbreak
	\begin{enumerate}
	\item The \lcnamecref{OddChromIso} was already known to be true, by a very different proof, when $G_1$ and~$G_2$ are abelian, without any assumption on the orders of the groups, or even that the groups are finite 
		\cite[Thm.~1.2]{AlimirzaeiMorris-abelian}. 
	\item By applying the \lcnamecref{OddChromIso} to the inverse of~$\varphi$, we can reverse the order of the composition: $\varphi$ is also the composition of a group isomorphism from $G_1$ to~$G_2$ and a colour-preserving automorphism of\/ $\Cay(G_2; S_2)$.
	\end{enumerate}
\end{rems}

The \lcnamecref{OddChromIso} implies that every group of odd order is determined by any of its (edge-coloured) connected Cayley graphs:

\begin{cor} \label{OddChromIsoImpliesGIso}
Assume $G_1$ is a finite group of odd order, and $G_2$ is any finite group. If there is a chromatic isomorphism from some connected Cayley graph of~$G_1$ to some Cayley graph of~$G_2$, then $G_1$ is isomorphic to~$G_2$.
\end{cor}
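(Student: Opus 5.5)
The plan is to deduce this directly from \cref{OddChromIso}; the only work is to verify that the hypotheses of that theorem hold. Let $\varphi \colon \Cay(G_1; S_1) \to \Cay(G_2; S_2)$ be a chromatic isomorphism, where $\Cay(G_1; S_1)$ is connected and $|G_1|$ is odd.

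First, $\varphi$ is a graph isomorphism, so it is a bijection on vertex sets. Hence $|G_2| = |G_1|$, and in particular $|G_2|$ is odd. Second, connectedness is invariant under graph isomorphism, so $\Cay(G_2; S_2)$ is also connected. Therefore both Cayley graphs are connected and have odd order, which is exactly the hypothesis of \cref{OddChromIso}.

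Applying \cref{OddChromIso}, we can write $\varphi = \alpha \compose \gamma$, where $\gamma$ is a colour-preserving automorphism of $\Cay(G_1; S_1)$ and $\alpha \colon G_1 \to G_2$ is a group isomorphism. In particular, a group isomorphism from $G_1$ to $G_2$ exists, so $G_1 \iso G_2$.

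There is no real obstacle here. The one point to check is that the oddness of $|G_2|$ and the connectedness of $\Cay(G_2;S_2)$ are not assumed but must be transferred through $\varphi$, and this is immediate because $\varphi$ is a bijection that preserves adjacency. All the substantive content lies in \cref{OddChromIso}.
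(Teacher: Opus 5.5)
Your proposal is correct and takes the same route as the paper, which presents this corollary as an immediate consequence of \cref{OddChromIso} without writing out a separate proof. Your explicit check that $|G_2| = |G_1|$ is odd and that $\Cay(G_2;S_2)$ is connected, both transferred through the graph isomorphism $\varphi$, is exactly the verification the paper leaves implicit.
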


\begin{rem} \label{NotSameGroup}
It is well known that \cref{OddChromIsoImpliesGIso}'s restriction on the order of~$G_1$ cannot be omitted. (So it also cannot be omitted in \cref{OddChromIso}.)
For example, let 
	\[ D_{2n} = \langle \, a, b \mid a^n = b^2 = abab = 1 \,\rangle \]
be the dihedral group of order~$2n$. Then $\ZZ_n \oplus \ZZ_2 \not\iso D_{2n}$ (if $n \ge 3$), but there is a chromatic isomorphism
	\[ \text{from \ $\Cay \bigl( \ZZ_n \oplus \ZZ_2; \{ \pm(1,0), (0,1)\} \bigr)$ \  to \ $\Cay \bigl( D_{2n}; \{a^{\pm1}, b\} \bigr)$.} \]
\end{rem}

Although \cref{NotSameGroup} shows that \cref{OddChromIso} cannot be extended to include all groups of square-free order, the following result accomplishes this when $G_1 = G_2$.

\begin{thm} \label{SquareFreeAut}
If\/ $\Cay(G; S_1)$ and\/ $\Cay(G; S_2)$ are connected Cayley graphs of a group~$G$ of square-free order, then every chromatic isomorphism from\/ $\Cay(G; S_1)$ to\/ $\Cay(G; S_2)$ is the composition of a colour-preserving automorphism of\/ $\Cay(G; S_1)$ and an automorphism of the group~$G$.
\end{thm}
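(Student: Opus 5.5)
The plan is to recast the statement as a conjugacy statement about regular subgroups of the colour-preserving automorphism group. For $g \in G$ let $\ell_g$ denote left multiplication by~$g$, and let $\ltrans = \{\ell_g\}$; these are colour-preserving automorphisms of every Cayley graph of~$G$. Let $\caut_2$ be the group of colour-preserving automorphisms of $\Cay(G; S_2)$. If $\varphi$ is a chromatic isomorphism from $\Cay(G;S_1)$ to $\Cay(G;S_2)$, then conjugating by~$\varphi$ carries colour-preserving automorphisms of $\Cay(G;S_1)$ to colour-preserving automorphisms of $\Cay(G;S_2)$. This works because $\varphi$ maps each colour class of $\Cay(G;S_1)$ onto a colour class of $\Cay(G;S_2)$ via~$\pi$. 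Hence $\varphi \ltrans \varphi^{-1}$ and $\ltrans$ are both regular subgroups of~$\caut_2$ that are isomorphic to~$G$.

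Suppose we can find $c \in \caut_2$ with $c \varphi \ltrans \varphi^{-1} c^{-1} = \ltrans$. Then $\psi := c\varphi$ normalizes~$\ltrans$. Compose with a translation $\ell \in \ltrans$ so that $\alpha := \ell c \varphi$ fixes~$1$; $\alpha$ still normalizes~$\ltrans$. The standard argument now shows $\alpha$ is a group automorphism. Write $\alpha \ell_g \alpha^{-1} = \ell_{\beta(g)}$; evaluating at~$1$ gives $\alpha(g) = \beta(g)$, and $\beta$ is a homomorphism. Thus $\varphi = c^{-1}\ell^{-1}\alpha$, which is a colour-preserving automorphism of $\Cay(G;S_2)$ composed with an automorphism of~$G$. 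Applying the same argument to $\varphi^{-1}$ (as in the Remarks after \cref{OddChromIso}) reverses the order and gives exactly the form stated in \cref{SquareFreeAut}.

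Everything therefore reduces to the following claim: any two regular subgroups of~$\caut_2$ isomorphic to~$G$ are conjugate in~$\caut_2$.

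First I will show that the vertex stabilizer in~$\caut_2$ is a $2$-group. Let $\gamma$ be an element of odd order fixing a vertex~$v$. For each colour $\{s^{\pm1}\}$, $\gamma$ permutes the at most two neighbours $vs, vs^{-1}$, so it fixes them. By connectivity, $\gamma$ is the identity. Hence $|\caut_2| = 2^k |G|$. Since $|G|$ is square-free, every Sylow $p$-subgroup of~$\caut_2$ with $p$ odd has order~$p$. I would also use that $G$, having square-free order, has all Sylow subgroups cyclic. Such a group is metacyclic and solvable, and it has a characteristic Hall subgroup~$N$ of odd order with $|G:N| \le 2$.

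The main obstacle is the conjugacy claim itself. My first attempt will be to prove that $\caut_2$ is solvable, and then apply Hall's theorem. Every regular subgroup isomorphic to~$G$ has order exactly the $2'$-part of $|\caut_2|$ times at most one factor of~$2$. The odd parts are Hall $\{2\}'$-subgroups, so Hall's theorem makes them conjugate. After conjugating so that the odd parts agree, the regular subgroups are determined inside the normalizer of that Hall subgroup by a Sylow $2$-subgroup of order at most~$2$ together with a regularity condition. There I would finish with a Sylow plus Frattini-type argument. To get solvability I would try to exploit the stabilizer structure. The stabilizer is a $2$-group acting with each element of odd order acting semiregularly. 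Together with cyclic odd Sylow subgroups of prime order, this should give, via Burnside's transfer theorem applied prime by prime starting from the smallest odd prime, a normal $p$-complement tower. If this fails, the fallback is to use the odd-order case, \cref{OddChromIso}, on the quotient by the characteristic subgroup~$N$, or on the action of~$N$, and then treat the remaining index-$2$ extension by hand.
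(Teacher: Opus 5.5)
Your reduction to the conjugacy of $\ltrans$ and $\varphi\ltrans\varphi^{-1}$ in $\caut_2$ is correct, and so is your proof that vertex stabilizers are $2$-groups. But that conjugacy is the entire content of the theorem, and both stages of your plan for it fail.

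First, $\caut_2$ need not be solvable, even when $|G|$ is odd and square-free. Write $F_{21} = \ZZ_7 \rtimes \ZZ_3$ with $(u,e)(v,f) = (u + 2^e v,\, e+f)$, and let $a = (0,1)$, $b = (1,1)$, $S = \{a^{\pm1}, b^{\pm1}\}$ (the exceptional case of \cref{OddSquarefree}). The left cosets of $\langle a\rangle$ are the fibres $\{u\} \times \ZZ_3$, and the cosets of $\langle b\rangle$ project onto the translates of the difference set $\{0,1,3\}$. So $\Cay(F_{21};S)$ is the flag graph of the Fano plane, its colour-preserving automorphisms are exactly the collineations, and $\caut \iso \PSL(2,7)$, which is simple. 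Burnside's transfer theorem already fails at $p=3$, since the Sylow $3$-normalizer is $S_3$. Hence Hall's theorem is unavailable. Adding a central involution, i.e.\ $F_{21} \times \ZZ_2$, gives a nonsolvable example of even order. The missing ingredient is that $2$-complements are conjugate in \emph{every} finite group (Arad--Ward, \cref{2CompConj}). This is what \cref{AffFor2Comp} uses to match the odd-order parts, and when $|G|$ is odd it finishes the proof at once.

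Second, the index-$2$ step cannot be a generic Sylow/Frattini argument, because the analogous conjugacy is false without further input. Take $G = D_{2p} \times \ZZ_p$, of order $2p^2$, whose $2$-complement $\ZZ_p \times \ZZ_p$ has index~$2$. The proof of \cref{CCAiffStrong} gives a non-affine colour-permuting automorphism of a CCA Cayley graph of this group. By your own reduction, the two regular subgroups there are both isomorphic to $G$ and have conjugate odd parts, yet they are not conjugate in the solvable group $\caut \iso \ZZ_2 \times D_{2p} \times D_{2p}$. So square-freeness must be used exactly at this step, and neither your Sylow/Frattini finish nor your fallback says how.

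The paper does this in \cref{SuperSolv}, using that the $2$-complement $H$ has a characteristic series with cyclic factors $H_k/H_{k-1}$. It first arranges $\varphi(hg) = \alpha(h)\,\varphi(g)$ for $h \in H$ and $\varphi(\1) = \1$, and passes to the generators $S_i^*$ of prime-power order. It then shows $\varphi(gt) \neq \varphi(g)\,\varphi(t)^{-1}$ for $g \notin H$ and $t \in S_1^*$ of odd order, for two reasons. Modulo $H_{k-1}$, conjugation by $g$ commutes with $\alpha$, because the automorphism group of a cyclic group is abelian. And conjugation by an element of odd order cannot invert a nontrivial element of the odd-order cyclic group $H_k/H_{k-1}$.
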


\begin{defn}[cf.\ {\cite[Defns.~1.2 and 1.4]{HKMM}}]
\leavevmode
\noprelistbreak
\begin{itemize}
	\item A bijection $\varphi$ from a group~$G_1$ to a group~$G_2$ is \defit{affine} if it is the composition of a left translation and a group isomorphism. More concretely, there exist an element $g_1 \in G_1$ and a group isomorphism $\tau \colon G_1 \to G_2$, such that $\varphi(x) = \tau(g_1 \, x)$.
	
	\item A chromatic isomorphism from a Cayley graph to itself is called a \defit{colour-permuting automorphism}.

	\item A Cayley graph is:
		\begin{itemize}
		\item \defit{CCA} if every colour-preserving automorphism is affine;
		\item \defit{strongly CCA} if every colour-  permuting automorphism is affine. 
		\end{itemize}
	\end{itemize}
\end{defn}

It is known that if every connected Cayley graph of a finite group~$G$ of odd order is CCA, then every connected Cayley graph of~$G$ is strongly CCA \cite[Prop.~6.4]{HKMM}. (The converse is obvious.) The implication ($\ref{CCAiffStrong-n} \Rightarrow \ref{CCAiffStrong-cca}$) of the following consequence of \cref{OddChromIso,SquareFreeAut} strengthens this by establishing that the equivalence holds for each individual Cayley graph, not only when all of them are CCA. It also generalizes the earlier result, by allowing the order to be square-free, rather than odd. The reverse implication shows that the generalization (plus two small exceptional cases) is best possible.

\begin{cor} \label{CCAiffStrong}
For $n \in \NN^+$, the following are equivalent:
	\begin{enumerate}
	\item \label{CCAiffStrong-cca}
	Every CCA Cayley graph of order~$n$ is strongly CCA.
	\item \label{CCAiffStrong-n}
	Either $n$ is odd, or $n$ is square-free, or $n \in \{4, 8\}$.
	\item \label{CCAiffStrong-colperm}
	Every colour-permuting automorphism of every connected Cayley graph of order~$n$ is the composition of a colour-preserving graph automorphism and a group automorphism.
	\end{enumerate}
\end{cor}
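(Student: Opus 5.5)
The three implications (\ref{CCAiffStrong-n})$\Rightarrow$(\ref{CCAiffStrong-colperm})$\Rightarrow$(\ref{CCAiffStrong-cca})$\Rightarrow$(\ref{CCAiffStrong-n}) will be proved in that order. Throughout, Cayley graphs in the definitions of CCA and strongly CCA are taken to be connected, as in \cite{HKMM}.

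For (\ref{CCAiffStrong-n})$\Rightarrow$(\ref{CCAiffStrong-colperm}), a colour-permuting automorphism $\varphi$ of $\Cay(G;S)$ is a chromatic isomorphism from $\Cay(G;S)$ to itself. If $n$ is odd, \cref{OddChromIso} applies with $G_1=G_2=G$ and $S_1=S_2=S$. If $n$ is square-free, \cref{SquareFreeAut} applies. The exceptional orders $n \in \{4,8\}$ involve only finitely many groups ($\ZZ_4$, $\ZZ_2^2$, and the five groups of order~$8$) and finitely many connected Cayley graphs. I would check these directly: for each colour-permuting $\varphi$ with colour bijection $\pi$, after composing with a translation we may assume $\varphi(1)=1$. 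It then suffices to find an automorphism $\alpha$ of $G$ whose induced permutation of colours agrees with $\pi$, so that $\alpha^{-1}\compose\varphi$ is colour-preserving. This can be done case by case, for example for $Q_8$ and $D_8$ using the explicit automorphism groups.

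For (\ref{CCAiffStrong-colperm})$\Rightarrow$(\ref{CCAiffStrong-cca}), let $\Cay(G;S)$ be CCA and let $\varphi$ be colour-permuting. By (\ref{CCAiffStrong-colperm}) we can write $\varphi=\alpha\compose\psi$ with $\psi$ colour-preserving and $\alpha\in\mathrm{Aut}(G)$. Since the graph is CCA, $\psi$ is affine, $\psi(x)=\tau(gx)$. Hence $\varphi(x)=(\alpha\compose\tau)(gx)$ is affine, so the graph is strongly CCA.

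For (\ref{CCAiffStrong-cca})$\Rightarrow$(\ref{CCAiffStrong-n}), I argue by contraposition. Suppose $n$ is even, not square-free, and $n\notin\{4,8\}$. If $4\mid n$, then either $n$ has an odd prime divisor~$p$, so $4p\mid n$, or $n$ is a power of $2$ that is at least $16$, so $16\mid n$. If $4\nmid n$, then some odd $p$ has $p^2\mid n$, so $2p^2\mid n$. Thus it suffices to do two things:
\begin{itemize}
\item exhibit CCA but not strongly CCA Cayley graphs of orders $4p$, $16$ and $2p^2$ ($p$ an odd prime);
\item show that this property passes from order $m$ to every multiple $mk$.
\end{itemize}
For the base examples I would take dihedral-type or $\ZZ_m\oplus\ZZ_2$-type groups in the spirit of \cref{NotSameGroup}. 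The point is to choose a colour permutation realized by a graph automorphism (for instance swapping two generators of the same order via a non-affine map) that no group automorphism induces, while colour-preserving automorphisms are forced to be affine.

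For the lifting step I would take the Cartesian product $\Cay(G;S)\cartprod\Cay(\ZZ_k;\{\pm1\})$, which is the Cayley graph of $G\times\ZZ_k$ with generators $S\times\{0\}\cup\{(1,\pm1)\}$. The product of the bad colour-permuting automorphism with the identity remains colour-permuting and non-affine. The main obstacle is proving that the product graph is still CCA. A colour-preserving automorphism preserves the $\ZZ_k$-colour class, so it permutes the $G$-fibres, and I would try to show it acts on each fibre by a colour-preserving (hence affine) map, with these maps compatible along the cycle edges. Issues arise when $\gcd(|G|,k)>1$ or when the cycle colour could coincide in order with colours in $S$. If so, I would instead choose new base examples directly for each residue pattern of $n$.
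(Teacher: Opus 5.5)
\textbf{Where the proposal stands.} Your arguments for (\ref{CCAiffStrong-colperm})$\Rightarrow$(\ref{CCAiffStrong-cca}) and (\ref{CCAiffStrong-n})$\Rightarrow$(\ref{CCAiffStrong-colperm}) follow the paper. For $n \in \{4,8\}$ your finite check is legitimate, and it can be shortened: abelian groups are covered by the result quoted in the remarks after \cref{OddChromIso}, $D_8$ by \cref{dihedral}, and $Q_8$ because $\mathrm{Aut}(Q_8)$ induces every permutation of the colours $\{\pm i\},\{\pm j\},\{\pm k\}$.

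\textbf{The gap: no base examples.} The genuine gap is in (\ref{CCAiffStrong-cca})$\Rightarrow$(\ref{CCAiffStrong-n}). The whole content of this direction is to produce CCA Cayley graphs that are not strongly CCA, and you never produce one. Moreover, the groups you point to cannot work if taken literally. \cref{abelian} excludes abelian groups at every order, and \cref{dihedral} excludes dihedral groups at orders $4p$ and $16$.

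The missing idea is a direct product in which a \emph{central} and a \emph{non-central} generator of the same order span isomorphic factors of a Cartesian product. For example:
\begin{itemize}
\item $D_{2p}\times\ZZ_2$ or $D_8\times\ZZ_2$ with $S=\{a^{\pm1},b,c\}$, giving $C_p\cartprod P_2\cartprod P_2$ or $C_4\cartprod P_2\cartprod P_2$;
\item $D_{2p}\times\ZZ_p$ with $S=\{a^{\pm1},b,c^{\pm1}\}$, giving $P_2\cartprod C_p\cartprod C_p$.
\end{itemize}
Because each factor carries its own colours, colour-preserving automorphisms preserve the factors. The stabilizer of $\1$ consists of reversals of cycle factors, which are realized by the automorphisms $a\mapsto a^{\pm1}$, $c\mapsto c^{\pm1}$ fixing $b$. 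Hence these graphs are CCA.

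Swapping the two isomorphic factors is colour-permuting. However, an affine map $x\mapsto\tau(gx)$ carries the colour $\{s^{\pm1}\}$ to $\{\tau(s)^{\pm1}\}$. So $\tau$ would have to send the non-central $b$ (resp.\ $a$) to the central $c$ (resp.\ $c^{\pm1}$), which is impossible. The paper writes such examples down directly for every $n$: $D_{n/2}\times\ZZ_2$ if $4\mid n$, and $D_{2p}\times(\ZZ_p)^\ell\times\ZZ_m$ otherwise. It therefore needs no lifting step.

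\textbf{The lifting step is routine.} The step you flagged as the main obstacle goes through, and your worries about $\gcd(|G|,k)$ and about orders of colours are irrelevant.
\begin{itemize}
\item \emph{CCA is preserved.} Assume $\Cay(G;S)$ is connected, and let $\psi$ be a colour-preserving automorphism of $\Cay(G;S)\cartprod C_k$. Then $\psi$ permutes the components of the $S$-coloured subgraph (the $G$-fibres) and those of the cycle-coloured subgraph (the $\ZZ_k$-fibres). So $\psi=\rho\times\sigma$ with $\rho$ colour-preserving on $\Cay(G;S)$ and $\sigma\in\mathrm{Aut}(C_k)$. Both are affine, hence so is $\psi$.
\item \emph{Non-affineness is preserved.} Suppose $\varphi\times\mathrm{id}$ equalled $x\mapsto\tau(gx)$. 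Evaluating at $g^{-1}$ forces $g\in G\times\{0\}$. Then $\tau(G\times\{0\})=G\times\{0\}$, and $\varphi$ would be affine.
\end{itemize}
So your reduction to orders $4p$, $16$, $2p^2$ plus lifting is a valid alternative to the paper's direct constructions. It becomes a proof only once base examples like those above are supplied and verified.
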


The above results apply when the order of the graph is either odd or square-free.  The following \lcnamecref{OddSquarefree} provides a much more precise statement when the order satisfies both conditions. The equivalence ($\ref{OddSquarefree-strong} \Leftrightarrow \ref{OddSquarefree-CCA}$) is immediate from \cref{CCAiffStrong}, and the equivalence ($\ref{OddSquarefree-CCA} \Leftrightarrow \ref{OddSquarefree-F21}$) was proved by E.\,Dobson, A.\,Hujdurovi\'c, K.\,Kutnar, and J.\,Morris \cite[Thm.~3.12]{DobsonEtAl-OddSquarefree}.

\begin{cor} \label{OddSquarefree}
If\/ $\Cay(G;S)$ is a connected Cayley graph of a finite group~$G$ whose order is odd and square-free, then the following are equivalent:
	\begin{enumerate}
	\item \label{OddSquarefree-strong}
	$\Cay(G;S)$ is \underline{not} strongly CCA.
	\item \label{OddSquarefree-CCA}
	$\Cay(G;S)$ is \underline{not} CCA.
	\item \label{OddSquarefree-F21}
	$G = F_{21} \times H$, where $F_{21}$ and~$H$ are subgroups of~$G$, such that 
		\begin{enumerate}
		\item $F_{21}$ is a nonabelian group of order\/~$21$, 
		\item $S \cap F_{21}$ consists of precisely four elements of order\/~$3$,
		and
		\item $S = (S \cap H) \cup (S \cap F_{21})$, so 
			\[ \Cay(G;S) \iso \Cay(F_{21}; S \cap F_{21}) \cartprod \Cay(H; S \cap H ) . \]
		\end{enumerate}
	\end{enumerate}
\end{cor}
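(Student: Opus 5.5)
The plan is to chain the three equivalences, using the two external inputs the paper has already pointed to. The equivalence of \pref{OddSquarefree-CCA} and \pref{OddSquarefree-F21} is \cite[Thm.~3.12]{DobsonEtAl-OddSquarefree}, which classifies the connected non-CCA Cayley graphs of odd square-free order. I will quote it, after checking that its hypotheses (connected, $G$ finite of odd square-free order) match ours exactly. So the real work is the equivalence of \pref{OddSquarefree-strong} and \pref{OddSquarefree-CCA}.

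For \pref{OddSquarefree-CCA} $\Rightarrow$ \pref{OddSquarefree-strong}, I use that every colour-preserving automorphism is colour-permuting. Hence strongly CCA implies CCA, and taking contrapositives, not CCA implies not strongly CCA.

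For \pref{OddSquarefree-strong} $\Rightarrow$ \pref{OddSquarefree-CCA}, I argue by contrapositive: assume $\Cay(G;S)$ is CCA and let $\varphi$ be a colour-permuting automorphism. Since $|G|$ is odd, \cref{OddChromIso} with $G_1 = G_2 = G$ and $S_1 = S_2 = S$ applies (alternatively \cref{SquareFreeAut}, since $|G|$ is square-free). It gives $\varphi = \tau \compose \psi$, where $\psi$ is a colour-preserving automorphism of $\Cay(G;S)$ and $\tau \in \operatorname{Aut}(G)$. By the CCA hypothesis, $\psi(x) = \sigma(g x)$ for some $g \in G$ and $\sigma \in \operatorname{Aut}(G)$. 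Then
\[ \varphi(x) = \tau\bigl(\sigma(gx)\bigr) = (\tau\compose\sigma)(gx), \]
so $\varphi$ is affine. Hence the graph is strongly CCA.

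The point that needs care is the order of composition in \cref{OddChromIso}. The statement says only ``the composition of a colour-preserving automorphism and a group isomorphism''. If instead $\varphi = \psi \compose \tau$, then $\varphi(x) = \sigma\bigl(g\,\tau(x)\bigr) = (\sigma\compose\tau)\bigl(\tau^{-1}(g)\,x\bigr)$, which is still affine. So the argument works for either order, and this is the only step needing verification. This is essentially the per-graph version of \cref{CCAiffStrong}, and citing that corollary's proof would also suffice.
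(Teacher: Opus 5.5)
Your proposal is correct and is essentially the paper's argument: the paper quotes Dobson--Hujdurovi\'c--Kutnar--Morris for the equivalence of (2) and (3), and gets the equivalence of (1) and (2) directly from \cref{CCAiffStrong}, whose relevant implications are proved by exactly the argument you write out (decompose via \cref{OddChromIso} or \cref{SquareFreeAut}, then observe that a composition of affine maps is affine). Your check that the order of composition does not matter is a small detail the paper leaves implicit.
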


\begin{ack}
This research was partially supported by a grant from the Natural Science and Engineering Research Council of Canada.
\end{ack}

\section{Proofs}

\begin{notation}
For $i = 1,2$, assume $G_i$ is a group, and $\Cay(G_i; S_i)$ is a Cayley graph of~$G_i$.%
\noprelistbreak
	\begin{enumerate}
	\item For each $g \in G_i$, we let $\ell^{G_i}_g \colon G_i \to G_i$ be the left-translation $x \mapsto gx$. This is a colour-preserving automorphism of $\Cay(G_i; S_i)$ \cite[p.~190]{HKMM}.
	\item For any subgroup~$H_i$ of~$G_i$, we let $\ltrans^{G_i}_{H_i} = \{\, \ell^{G_i}_h \mid h \in H_i \,\}$.
	\item If $\varphi$ is a chromatic isomorphism from $\Cay(G_1; S_1)$ to $\Cay(G_2; S_2)$, and $H_2$ is a subgroup of~$G_2$, we let
		\[(\ltrans^{G_2}_{H_2})^\varphi = \{\, \varphi^{-1} \compose \ell^{G_2}_h \compose \varphi \mid h \in H_2 \,\} . \]
	This is a group of colour-preserving automorphisms of $\Cay(G_1; S_1)$.
	\end{enumerate}
\end{notation}

\begin{defn}[cf.\ {\cite[p.~88]{Isaacs-GrpThy}}]
A subgroup~$H$ of a a finite group~$G$ is a \defit{2-complement} in~$G$ if $|H|$ is odd, and the index $|G : H|$ is a power of~$2$.
\end{defn}

\begin{eg} \label{Not4Comp}
If $|G|$ is not divisible by~$4$, then $G$ has a 2-complement.  Indeed, if $|G|$ is odd, then $G$ is a 2-complement in itself. For the remaining case, it is a standard exercise to show that if $|G| = 2k$, where $k$ is odd, then $G$ has a subgroup of order~$k$ \cite[Thm.~27.2, p.~347]{Gallian}.

Solvable groups always have a 2-complement \cite[Thm.~3.14, p.~87]{Isaacs-GrpThy}, but many other groups do not. For example, the only nonabelian finite simple groups that have a 2-complement are $\PSL(2,r)$, where $r$ is a Mersenne prime \cite[Thm.~1.3]{Simple2Comp}.
\end{eg}

For our purposes, it is important to know that the 2-complement is essentially unique, if it exists:

\begin{lem}[Arad-Ward {\cite[Cor.~4.5]{AradWard}}] \label{2CompConj}
All 2-complements in any finite group are conjugate.
\end{lem}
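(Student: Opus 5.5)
The plan is induction on $|G|$, reducing to a minimal normal subgroup $N$ of~$G$. Let $H$ and~$K$ be 2-complements in~$G$. Then $|H| = |K|$ is the odd part of~$|G|$. By the Feit--Thompson theorem, $H$ and~$K$ are solvable. This solvability is what makes the Schur--Zassenhaus conjugacy theorem available below.

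\emph{Reduction to $G = HN$.} For any nontrivial normal subgroup~$N$, the images $HN/N$ and $KN/N$ are 2-complements in $G/N$. Indeed, $|HN/N| = |H : H \cap N|$ is odd, and its index divides $|G:H|$, a power of~$2$. By induction these images are conjugate, so after replacing $K$ by a conjugate we may assume $HN = KN$. If $HN$ is a proper subgroup, then $H$ and~$K$ are 2-complements in~$HN$, and induction finishes the proof. So we may assume $G = HN$ for every nontrivial normal subgroup~$N$.

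\emph{Case analysis on a minimal normal subgroup~$N$.}
\begin{enumerate}
\item If $|N|$ is odd, then $|HN| = |H|\,|N : H \cap N|$ is odd, while $|H|$ is already the full odd part of~$|G|$. Hence $N \le H$, and so $G = HN = H$. Then $G$ has odd order, and $H = K = G$.
\item If $N$ is a $2$-group, then $H \cap N = K \cap N = 1$. So $H$ and~$K$ are complements to the normal subgroup~$N$ of coprime order. By Schur--Zassenhaus (conjugacy part, valid since $H$ is solvable), $H$ and~$K$ are conjugate.
\item Otherwise $N \cong T_1 \times \cdots \times T_m$ with the $T_i$ isomorphic nonabelian simple groups of even order. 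Here $H \cap N$ and $K \cap N$ are 2-complements in~$N$, because $|N : H\cap N| = |G:H|$ is a power of~$2$. Projecting to the factors gives 2-complements in each~$T_i$.
\end{enumerate}

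\emph{Finishing the third case.} If the 2-complements of $N$ are conjugate in~$N$, a Frattini-type argument finishes. After conjugating by an element of~$N$, we get $H \cap N = K \cap N =: M$, so $H$ and~$K$ both lie in $N_G(M)$. If $N_G(M)$ is proper, induction applies. If $N_G(M) = G$, then $M \trianglelefteq G$ has odd order, and the first case applies to a minimal normal subgroup inside~$M$ (or $M = 1$, which forces $N$ to be a $2$-group, a contradiction). So everything reduces to conjugacy of 2-complements in a direct product of copies of a nonabelian simple group~$T$. This in turn reduces to conjugacy in~$T$ itself, since a 2-complement of the product must be a product of 2-complements of the factors, by an order count on projections.

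\emph{Main obstacle.} The hard step is the simple case. I would invoke the result cited in \cref{Not4Comp}: $T \cong \PSL(2,r)$ with $r = 2^p - 1$ a Mersenne prime. Then $|T| = r(r-1)(r+1)/2$, and a 2-complement has order $r(r-1)/2$. Such a subgroup contains a Sylow $r$-subgroup~$R$. By a count of Sylow $r$-subgroups, or by the known subgroup structure of $\PSL(2,r)$, it must normalize~$R$, so it is the unique index-$2$ subgroup of the Borel subgroup $N_T(R)$. Sylow's theorem then makes all such subgroups conjugate. Avoiding the classification here, as Arad and Ward presumably do via a more direct character-theoretic or transfer argument, is what I expect to be genuinely difficult.
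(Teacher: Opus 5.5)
The paper does not prove this lemma; it quotes Arad--Ward's Corollary~4.5. Your proposal supplies an actual proof, and it is correct apart from one harmless slip. It is a minimal-counterexample reduction through a minimal normal subgroup~$N$: the odd-order case, Schur--Zassenhaus when $N$ is a $2$-group, and a Frattini-type normalizer argument when $N$ is nonabelian. This brings you down to nonabelian simple groups, which you settle with the list of simple groups having a 2-complement quoted in \cref{Not4Comp}, together with Sylow's theorem.

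The reduction is sound and can be trimmed in two ways. First, Feit--Thompson is never needed. Conjugacy in Schur--Zassenhaus is elementary when the normal subgroup is an abelian $2$-group. Also, abelian minimal normal subgroups are already covered by your first two cases, so the $T_i$ in the third case automatically have even order. Second, in the third case the conjugacy of 2-complements of~$N$ is just the induction hypothesis whenever $N \neq G$. So the simple-group input is needed only when $G$ itself is simple.

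The slip is in the simple case. In $T = \PSL(2,r)$, a 2-complement $X$ satisfies $X \le N_T(R)$. Hence $n_r(T) = |T : N_T(R)|$ divides $|T : X| = r+1 = 2^p$; it is also $\equiv 1 \pmod r$ and not~$1$, so $n_r(T) = r+1$. Therefore $|N_T(R)| = r(r-1)/2$, and $X$ \emph{equals} $N_T(R)$ rather than being an index-$2$ subgroup of it. An index-$2$ subgroup would have order $r(r-1)/4$, which is not an integer because $(r-1)/2 = 2^{p-1}-1$ is odd. Your conclusion via Sylow conjugacy is unaffected.

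As for what each route buys: the citation is a one-liner, while your argument shows that the lemma amounts to a routine reduction plus the classification of simple groups with a 2-complement. Your guess that Arad and Ward avoid the classification is, as far as I know, mistaken: their work also rests on the classification of finite simple groups. So your argument uses no stronger inputs than the cited one.
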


This has the following consequence, which is fundamental for the proofs of our main results.

\begin{prop} \label{AffFor2Comp}
Assume
	\begin{itemize}
	\item $\Cay(G_1; S_1)$ and\/ $\Cay(G_2; S_2)$ are connected Cayley graphs, 
	\item $\varphi$ is a chromatic isomorphism from\/ $\Cay(G_1; S_1)$ to\/ $\Cay(G_2; S_2)$,
	and
	\item $H_i$ is a 2-complement in~$G_i$, for $i = 1,2$.
	\end{itemize}
Then there is a colour-preserving automorphism $\psi$ of $\Cay(G_1; S_1)$, 
and a group isomorphism $\alpha \colon H_1 \to H_2$, such that if we let $\varphi' = \varphi \compose \psi$, then $\varphi'(h g) = \alpha(h) \, \varphi'(g)$ for all $h \in H_1$ and $g \in G_1$.
\end{prop}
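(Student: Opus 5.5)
The plan is to place everything inside the finite group $\caut$ of colour-preserving automorphisms of $\Cay(G_1; S_1)$, observe that both $\ltrans^{G_1}_{H_1}$ and $(\ltrans^{G_2}_{H_2})^\varphi$ are 2-complements in~$\caut$, and then use \cref{2CompConj} to conjugate one onto the other. The conjugating element will be the desired~$\psi$.

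\emph{Step 1: both groups lie in $\caut$.} Clearly $\ltrans^{G_1}_{G_1} \subseteq \caut$. Next, $\varphi^{-1} \compose \ell^{G_2}_h \compose \varphi$ is colour-preserving: $\varphi$ carries colour $\{s^{\pm1}\}$ to $\{\pi(s)^{\pm1}\}$ by a fixed bijection~$\pi$, $\ell^{G_2}_h$ preserves that colour, and $\varphi^{-1}$ carries it back to $\{s^{\pm1}\}$. Hence $(\ltrans^{G_2}_{G_2})^\varphi \subseteq \caut$, as the Notation already asserts. Both $\ltrans^{G_1}_{G_1}$ and $(\ltrans^{G_2}_{G_2})^\varphi$ act regularly on the vertex set, so $|G_1| = |G_2|$. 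In particular $|H_1| = |H_2|$, since each is the odd part of this common order.

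\emph{Step 2: vertex stabilizers in $\caut$ are 2-groups.} This is the main point to check. Fix a vertex~$v$, let $B_r$ be the ball of radius~$r$ about~$v$, and let $K_r$ be the pointwise stabilizer of $B_r$ in~$\caut$, so that $K_0 = \Stab_\caut(v)$. Consider a vertex $y$ at distance $r+1$ from~$v$, adjacent to some $x \in B_r$ via colour $\{s^{\pm1}\}$. Since every element of $K_r$ fixes~$x$ and preserves colours, it maps $y$ into $\{xs, xs^{-1}\}$. Thus every $K_r$-orbit on the sphere of radius $r+1$ has size at most~$2$. The image of $K_r$ in the symmetric group of that sphere is therefore generated by commuting transpositions, so it is a 2-group, and its kernel is $K_{r+1}$. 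Hence $|K_r : K_{r+1}|$ is a power of~$2$. Connectedness and finiteness give $K_R = 1$ for large~$R$, so $|\Stab_\caut(v)|$ is a power of~$2$. Because $\ltrans^{G_1}_{G_1}$ is transitive, $|\caut| = |G_1| \cdot 2^k$ for some~$k$.

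\emph{Step 3: conjugate and define $\alpha$.} The groups $\ltrans^{G_1}_{H_1}$ and $(\ltrans^{G_2}_{H_2})^\varphi$ are isomorphic to $H_1$ and~$H_2$, so they have the same odd order $|H_1| = |H_2|$. By Step~2, each has index a power of~$2$ in~$\caut$, so both are 2-complements of~$\caut$. By \cref{2CompConj} there is $\psi \in \caut$ with
\[ \psi^{-1} \compose (\ltrans^{G_2}_{H_2})^\varphi \compose \psi = \ltrans^{G_1}_{H_1} . \]
Set $\varphi' = \varphi \compose \psi$. Then for each $h_2 \in H_2$ there is a unique $h_1 \in H_1$ with $\varphi'^{-1} \compose \ell^{G_2}_{h_2} \compose \varphi' = \ell^{G_1}_{h_1}$. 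Define $\alpha(h_1) = h_2$. Since $h \mapsto \ell_h$ is an injective homomorphism and conjugation is an isomorphism, $\alpha \colon H_1 \to H_2$ is a group isomorphism. Finally, $\ell^{G_2}_{\alpha(h)} \compose \varphi' = \varphi' \compose \ell^{G_1}_h$, which says exactly that $\varphi'(hg) = \alpha(h)\,\varphi'(g)$ for all $h \in H_1$ and $g \in G_1$.
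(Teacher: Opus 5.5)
Your proof is correct and follows the same route as the paper. Both $\ltrans^{G_1}_{H_1}$ and $(\ltrans^{G_2}_{H_2})^\varphi$ are 2-complements in the group $\caut$ of colour-preserving automorphisms, \cref{2CompConj} supplies the conjugating element~$\psi$, and $\alpha$ is read off from the resulting conjugation; the only difference is that you prove directly, with your ball-by-ball argument, that vertex stabilizers in $\caut$ are 2-groups, where the paper instead cites \cite[Lem.~6.3]{HKMM}. One wording fix: the image of $K_r$ on the sphere need not be generated by the transpositions of its 2-element orbits. It is only contained in the elementary abelian 2-group they generate, which is all your argument needs.
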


\begin{proof}
We apply the Frattini argument (cf.\ \cite[proof of Lem.~1.13, p.~15]{Isaacs-GrpThy}).
For convenience, let $\ltrans_i = \ltrans^{G_i}_{H_i}$ for $i = 1,2$. Also let 
	\[ \text{$\caut$ be the group of colour-preserving automorphisms of $\Cay(G_1; S_1)$} , \]
so $\ltrans_1$ and $\ltrans_2^\varphi$ are subgroups of~$\caut$.

Since $\Cay(G_1; S_1)$ is connected, it is easy to see, for every vertex $v \in G_1$, that the stabilizer $\Stab_{\caut}(v)$ is a $2$-group \cite[Lem.~6.3]{HKMM}, so it is immediate from the Orbit-Stabilizer Theorem \cite[Thm.~C-1.16, p.~9]{Rotman-AdvAlg2} that $|\caut : \ltrans^{G_1}_{G_1}|$ is a power of~$2$. Since $|G_1 : H_1|$ is also a power of~$2$, we conclude that $|\caut : \ltrans_1|$ is a power of~$2$. So $\ltrans_1$ is a 2-complement in~$\caut$. 
Since $\ltrans_1 = |H_1| = |H_2| = |\ltrans_2|$, then $\ltrans_2^\varphi$ is also a 2-complement in~$\caut$. Therefore, \cref{2CompConj} tells us there is some $\psi \in \caut$, such that $\ltrans_2^{\varphi \compose \psi} = \ltrans_1$. 

For each $h \in H_1$, the conclusion of the preceding paragraph implies there is some $\alpha(h) \in H_2$, such that 
	\[ \ell_{\alpha(h)} \compose \varphi \compose \psi = \varphi \compose \psi \compose \ell_h . \]
In other words, for all $g \in G_1$, we have $\alpha(h) \, \varphi \bigl( \psi(g) \bigr) = \varphi \bigl( \psi( h \, g )$.

From this, it is not difficult to see that $\alpha$ respects multiplication. Also, $\alpha$ must be a bijection, since $\varphi \compose \psi$ is a bijection. Hence, $\alpha$ is a group isomorphism from~$H_1$ to~$H_2$.
\end{proof}

\begin{proof}[\bf Proof of \cref{OddChromIso}]
Assume that $\varphi$ is a chromatic isomorphism from $\Cay(G_1; S_1)$ to $\Cay(G_2; S_2)$. Since $|G_1|$ is odd, the group $G_1$ is a 2-complement in~$G_1$. Hence, \cref{AffFor2Comp} provides a colour-preserving automorphism $\psi$ of $\Cay(G_1; S_1)$, and a group isomorphism $\alpha \colon G_1 \to G_2$, such that 
	\[ \text{$(\varphi \compose \psi)(h g) = \alpha(h) \, (\varphi \compose \psi)(g)$ \ for all $g,h \in G_1$} . \]
By precomposing $\psi$ with an appropriate left-translation $\ell_a$ (and replacing $\alpha$ with the automorphism $h \mapsto \alpha(aha^{-1})$), we may assume $(\varphi \compose \psi)(\1_{G_1}) = \1_{G_2}$. Then letting $g = \1_{G_1}$ in the above equation yields 
	\[ (\varphi \compose \psi)(h) 
	= (\varphi \compose \psi)(h \cdot \1_{G_1})
	= \alpha(h) \, (\varphi \compose \psi)(\1_{G_1})
	= \alpha(h) \cdot 1_{G_2}
	= \alpha(h) . \]
So $\varphi = \alpha \compose \psi^{-1}$  is the composition of the colour-preserving graph automorphism~$\psi^{-1}$ and the group isomorphism~$\alpha$.
\end{proof}

We will use the following simple observation:

\begin{lem}[cf.\ {\cite[Lem.~2.2(4a)]{AlimirzaeiMorris-abelian}}] \label{|phi(s)|} 
If $\varphi$ is a chromatic isomorphism from $\Cay(G_1; S_1)$ to $\Cay(G_2; S_2)$, such that $\varphi(\1_{G_1}) = \1_{G_2}$, then $|\varphi(s)| = |s|$ for all $s \in S_1$.
\end{lem}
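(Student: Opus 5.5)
The idea is to compare the monochromatic subgraphs of the two Cayley graphs: a chromatic isomorphism must carry each monochromatic connected component onto a monochromatic connected component, and these components are cosets of cyclic subgroups.

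Let $\pi \colon S_1 \to S_2$ be the bijection witnessing that $\varphi$ is chromatic. Fix $s \in S_1$, and let $\Gamma_1^s$ be the spanning subgraph of $\Cay(G_1; S_1)$ consisting of the edges coloured $\{s^{\pm1}\}$. Similarly, let $\Gamma_2^{\pi(s)}$ be the spanning subgraph of $\Cay(G_2; S_2)$ consisting of the edges coloured $\{\pi(s)^{\pm1}\}$.

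First, I will identify the connected components of $\Gamma_1^s$. A walk in $\Gamma_1^s$ starting at~$g$ only ever multiplies on the right by $s^{\pm1}$. Therefore the component containing~$g$ is the left coset $g\langle s \rangle$, which has exactly $|s|$ vertices. (Concretely, it is a cycle when $|s| \ge 3$, a single edge when $|s| = 2$, and a single vertex when $s = \1$.) The same argument shows that the component of $\Gamma_2^{\pi(s)}$ containing~$\1_{G_2}$ is $\langle \pi(s) \rangle$, which has $|\pi(s)|$ vertices.

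Second, I will show that $\varphi$ restricts to a graph isomorphism from $\Gamma_1^s$ onto $\Gamma_2^{\pi(s)}$.
\begin{itemize}
\item By the definition of chromatic, $\varphi$ maps every edge of colour $\{s^{\pm1}\}$ to an edge of colour $\{\pi(s)^{\pm1}\}$.
\item $\varphi$ is a bijection on edges, and $\pi$ is a bijection on colours.
\item Since $\pi$ is injective, no edge of any other colour can map into $\Gamma_2^{\pi(s)}$.
\end{itemize}
So $\varphi$ carries the components of $\Gamma_1^s$ bijectively onto the components of $\Gamma_2^{\pi(s)}$. Since $\varphi(\1_{G_1}) = \1_{G_2}$, it maps $\langle s \rangle$ bijectively onto $\langle \pi(s) \rangle$. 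Comparing cardinalities gives $|s| = |\pi(s)|$.

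Finally, I will pass from $\pi(s)$ to $\varphi(s)$. The vertex $s = \1_{G_1} \cdot s$ is adjacent to $\1_{G_1}$ by an edge of colour $\{s^{\pm1}\}$. Hence $\varphi(s) \in \{\varphi(\1_{G_1})\,\pi(s)^{\pm1}\} = \{\pi(s)^{\pm1}\}$. An element and its inverse have the same order, so $|\varphi(s)| = |\pi(s)| = |s|$.

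The only step needing some care is the second one: showing that components map onto components. This requires using that $\pi$ is a bijection, so that $\varphi^{-1}$ is also chromatic (with colour map $\pi^{-1}$). That makes the restriction to $\Gamma_1^s$ surjective onto the edges of $\Gamma_2^{\pi(s)}$, and not merely an injective homomorphism into it.
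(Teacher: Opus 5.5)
Your proof is correct and is essentially the paper's argument: restrict $\varphi$ to the monochromatic spanning subgraphs, whose components are left cosets of $\langle s\rangle$ and of $\langle \pi(s)\rangle = \langle \varphi(s)\rangle$, and compare component sizes (the paper names the target colour directly as $\{\varphi(s)^{\pm1}\}$ instead of passing through $\pi(s)$). The surjectivity step you flag needs $\pi$ to be injective on colour classes $\{s^{\pm1}\}\mapsto\{\pi(s)^{\pm1}\}$, not merely on elements of $S_1$ --- this is what ``respects the edge-colourings'' means, and the paper tacitly uses it too; with only the element-level condition, any isomorphism from $\Cay(D_{2n};\{b,ab\})$ to $\Cay(\ZZ_{2n};\{\pm1\})$ fixing the identity satisfies it via $b\mapsto 1$, $ab\mapsto -1$, yet sends $b$ to an element of order $2n$.
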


\begin{proof}
We have $\varphi(\1_{G_1}) = \1_{G_2}$ by assumption, so $\varphi$ sends edge $\bigl( \1_{G_1}, s \bigr)$ of $\Cay(G_1; S_1)$ to edge $\bigl( \1_{G_2}, \varphi(s) \bigr)$ of $\Cay(G_2; S_2)$. Since $\varphi$ is a chromatic isomorphism, this implies that $\varphi$ maps all of the edges of colour $\{s^{\pm1}\}$ in $\Cay( G_1; S_1)$ to edges of colour $\{\varphi(s)^{\pm1}\}$ in $\Cay( G_2; S_2)$. Hence, the map $\varphi$~induces an isomorphism from $\Cay \bigl( G_1; \{s^{\pm1}\} \bigr)$ to $\Cay \bigl( G_2; \{\varphi(s)^{\pm1}\} \bigr)$. Each component of the first graph has $|s|$ vertices, and each component of the second graph has $|\varphi(s)|$ vertices, so we must have $|s| = |\varphi(s)|$.
\end{proof}

The terminology in part~\pref{SuperSolvDefn-solv} of the following \lcnamecref{SuperSolvDefn} is motivated by the fact that a group is said to be ``supersolvable'' if it satisfies the condition, except that each subgroup $H_i$ is only required to be normal, rather than characteristic \cite[Problem 3B.7, p.~85]{Isaacs-GrpThy}. The notion in part~\pref{SuperSolvDefn-meta} was classically known as ``metacyclic\rlap,'' but modern authors use this term for the weaker condition that $G$ has a cyclic normal subgroup~$N$, such that $G/N$ is cyclic \cite{MetacyclicHistory}. Therefore, we add the adjective ``strongly'' to avoid conflict with the terminology that is currently in use.

\begin{defn} \label{SuperSolvDefn}
Let $H$ be a group.
\noprelistbreak
	\begin{enumerate}
 	\item \label{SuperSolvDefn-char}
	Recall that a subgroup~$N$ of~$H$ is \defit{characteristic} if $\alpha(N) = N$ for every automorphism~$\alpha$ of~$H$ \cite[p.~11]{Isaacs-GrpThy}.
	\item \label{SuperSolvDefn-solv}
	We will say that~$H$ is \defit{characteristically supersolvable} if there is a series 
	\[ \{\1\} = H_0 \subseteq H_1 \subseteq \cdots \subseteq H_r = H , \]
such that each $H_k$ is a characteristic subgroup of~$H$, and the quotient $H_k/H_{k-1}$ is cyclic for $k = 1,2,\ldots, r$.
	\item \label{SuperSolvDefn-meta}
	$H$ is \defit{strongly metacyclic} if its commutator subgroup $[H,H]$ and its abelianization $H/[H,H]$ are cyclic.
	\end{enumerate}
\end{defn}

\begin{eg} \label{SuperEg}
Well-known examples of characteristically supersolvable groups~$H$ include:
	\begin{enumerate}
	\item cyclic groups ($r = 1$),
	\item strongly metacyclic groups ($r = 2$ and $H_1 = [H,H]$),
	and
	\item \label{SuperEg-squarefree}
	groups of square-free order (because it is well known that they are strongly metacyclic \cite[Thm.~V.3.11, p.~175]{Zassenhaus-ThyGrps}).
	\end{enumerate}
\end{eg}

We will see that (the even-order case of) \cref{SquareFreeAut} is a special case of the following result.

\begin{thm} \label{SuperSolv}
If $G$ has a characteristically supersolvable 2-complement~$H$, such that $|G : H| = 2$, then every colour-permuting automorphism of every connected Cayley graph of~$G$ is the composition of a colour-preserving graph automorphism and a group automorphism. Therefore, every CCA Cayley graph of~$G$ is strongly CCA.
\end{thm}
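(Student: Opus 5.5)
Throughout, let $\varphi$ be a colour-permuting automorphism of a connected Cayley graph $\Cay(G;S)$, and write $G = H \cup Ht$. Since $\Cay(G;S)$ is connected, $S \not\subseteq H$, so we may take $t \in S \smallsetminus H$. The plan is first to make $\varphi$ affine on~$H$ using \cref{AffFor2Comp}, and then to show that the resulting automorphism of~$H$ extends to an automorphism of~$G$ that agrees with the modified $\varphi$ on all of~$G$.

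\textbf{Step 1 (reduction to $H$).} Apply \cref{AffFor2Comp} with $G_1 = G_2 = G$, $S_1 = S_2 = S$ and $H_1 = H_2 = H$. This gives a colour-preserving automorphism~$\psi$ and an automorphism $\alpha \in \operatorname{Aut}(H)$ such that $\varphi' = \varphi \compose \psi$ satisfies $\varphi'(hg) = \alpha(h)\,\varphi'(g)$. We may normalize so that $\varphi'(\1) = \1$. Indeed, for any $b$ we have $\ell_b \compose \varphi' = \varphi' \compose (\varphi'^{-1} \compose \ell_b \compose \varphi')$, and the conjugate in parentheses is colour-preserving. Moreover $\ell_b \compose \varphi'$ still normalizes $\ltrans_H$, because $H \trianglelefteq G$ (it has index~$2$). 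So $\varphi'|_H = \alpha$ and $\varphi'(Ht) = Ht$. Put $t' = \varphi'(t)$. Then $\varphi'(ht) = \alpha(h)\,t'$, and $|t'| = |t|$ by \cref{|phi(s)|}.

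\textbf{Step 2 (what is needed).} Define $\beta \colon G \to G$ by $\beta(h) = \alpha(h)$ and $\beta(ht) = \alpha(h)\,t'$, so that $\beta = \varphi'$. Then $\beta$ is a group automorphism if and only if
\[ \alpha(t^2) = t'^2 \quad\text{and}\quad \alpha \compose c_t = c_{t'} \compose \alpha \text{ on } H, \]
where $c_x$ denotes conjugation by~$x$. If these hold, then $\varphi = \beta \compose \psi^{-1}$, which is the desired factorization.

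To measure the failure of these conditions, I would consider
\[ \theta = \ell_{t'}^{-1} \compose \varphi' \compose \ell_t \compose \varphi'^{-1}. \]
This is colour-preserving, fixes~$\1$, and normalizes~$\ltrans_H$. A direct computation should show that
\[ \theta(hg) = \gamma(h)\,\theta(g), \qquad\text{where } \gamma = c_{t'}^{-1} \compose \alpha \compose c_t \compose \alpha^{-1} \in \operatorname{Aut}(H), \]
and that $\theta(t^{-1})$ records the discrepancy between $\alpha(t^2)$ and $t'^2$. Since $\Stab_{\caut}(\1)$ is a $2$-group \cite[Lem.~6.3]{HKMM}, $\gamma$ is an automorphism of $2$-power order of the odd-order group~$H$. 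Moreover, because $\theta$ is colour-preserving and fixes~$\1$, $\gamma$ sends each $h$ that is a product of generators to a product of the same generators with signs changed.

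\textbf{Step 3 (main obstacle: killing $\gamma$).} This is where characteristic supersolvability enters, and it is the step I expect to be hardest. Take the characteristic series $\{\1\} = H_0 \subseteq \cdots \subseteq H_r = H$ with cyclic factors. Every automorphism of~$H$, and in particular $\alpha$, $c_t$, $c_{t'}$ and~$\gamma$, preserves each~$H_k$ and acts on each odd cyclic factor $H_k/H_{k-1}$ by multiplication by a unit. On these factors the units commute, so $\gamma$ acts on each factor as $c_{t'}^{-1}c_t$ does. Hence the twisting by~$\alpha$ is invisible at the level of the factors, and $\gamma$ acts on each factor by a unit of $2$-power order.

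I would then argue by induction on~$r$ that $\gamma$ can be absorbed into the choices made so far. The available freedom is to replace $t'$ by $t'h_0$ (equivalently, to adjust $\psi$ by a left translation), and to replace $\alpha$ by $\alpha$ composed with an inner automorphism. The target at each stage is to make $\alpha c_t \alpha^{-1}$ and $c_{t'}$ agree modulo~$H_{k-1}$. For this I would use that the relevant cohomology of the odd cyclic factors with coefficients in a $2$-group vanishes, so the discrepancies are coboundaries (a Schur--Zassenhaus-type argument). Once $\gamma = 1$, the condition $t'^2 = \alpha(t^2)$ should follow from $\theta(t^{-1}) = t^{-1}$, using that $t'^2$ and $\alpha(t^2)$ are then both elements of~$H$ centralized appropriately with $|t'| = |t|$.

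The final sentence of the theorem is immediate from the first. If the graph is CCA, then $\psi^{-1}$ is affine, so $\varphi = \beta \compose \psi^{-1}$ is affine as well.
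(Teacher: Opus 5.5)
\textbf{There is a genuine gap in Step 3.} Steps 1 and 2 are sound. The normalization, the extension criterion, and the computation $\theta(hg)=\gamma(h)\,\theta(g)$ with $\gamma$ of $2$-power order are all correct. But Step 3, which is exactly where the hypothesis on~$H$ must be used, contains no working argument.

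\emph{Why the current Step 3 does not work.} Knowing only that $\gamma$ acts on each factor $H_k/H_{k-1}$ by a unit of $2$-power order proves nothing, since inversion is such a unit on any odd cyclic group. The proposed absorption is also not a well-defined procedure. The maps $\alpha$ and $t'=\varphi'(t)$ are determined by~$\varphi'$. For instance, replacing $\psi$ by $\psi\compose\ell_{h_0}$ with $h_0\in H$ and renormalizing returns exactly the same~$\varphi'$, because $\varphi'(h_0x)=\alpha(h_0)\,\varphi'(x)$. In fact nothing needs to be absorbed: the paper shows that your normalized $\varphi'$ is already a group automorphism, so $\gamma$ is automatically trivial.

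\emph{The missing observation.} Since $t,t'\in Ht$, the restriction of $c_{t'}^{-1}c_t$ to~$H$ is conjugation by $t'^{-1}t\in H$, an element of \emph{odd} order. So on each cyclic factor, $\gamma$ acts by a unit whose order is both odd and a power of~$2$, hence trivially. An elementary induction then gives $\gamma=1$:
\begin{itemize}
\item suppose $\gamma$ fixes $H_{k-1}$ pointwise and $x\in H_k$;
\item then $\gamma(x)=xn$ with $n\in H_{k-1}$, so $\gamma^j(x)=xn^j$;
\item $n^{2^e}=\1$ for the $2$-power order $2^e$ of~$\gamma$, and $|n|$ is odd, so $n=\1$.
\end{itemize}
The paper reaches the same point more directly. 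After passing to the prime-power-order elements~$S^*$, it rules out $\varphi(gt)=\varphi(g)\,\varphi(t)^{-1}$ for odd-order~$t$ by working modulo~$H_{k-1}$. There the relevant twist is conjugation by the odd-order element $g^{-1}\varphi(g)\in H$, which cannot invert a nontrivial element of an odd cyclic group.

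\textbf{The condition $t'^2=\alpha(t^2)$ is also unsupported as written.} Your derivation assumes $\theta(t^{-1})=t^{-1}$. A colour-preserving automorphism fixing~$\1$ only gives $\theta(t^{-1})\in\{t^{\pm1}\}$. The condition is, however, immediate without~$\theta$. Since $\varphi'$ is chromatic and fixes~$\1$, and $t^2\in H$, we have $\alpha(t^2)=\varphi'(t\cdot t)\in\{t'^2,\1\}$. Moreover $\alpha(t^2)=\1$ only if $|t|=2$, in which case $|t'|=2$ by \cref{|phi(s)|}. With these two repairs, your framework becomes a valid alternative proof.
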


\begin{proof}[Proof\/ \normalfont (similar to the proof of ($2 \Rightarrow 1$) of {\cite[Cor.~6.13]{HKMM})}]
Let $\varphi$ be a chromatic isomorphism from $\Cay(G; S_1)$ to $\Cay(G; S_2)$. We see from \cref{AffFor2Comp} that, by precomposing $\varphi$ with a colour-preserving automorphism, we may assume there is an automorphism $\alpha$ of~$H$, such that
	\begin{align} \label{SuperSolvPf-phi(hg)}
	\text{$\varphi(h g) = \alpha(h) \, \varphi(g)$ \quad for all $h \in H$ and $g \in G$} 
	. \end{align}
By further precomposing with a left-translation (which is another a colour-preserving automorphism), we may also assume $\varphi(\1) = \1$. (Since $|G:H| = 2$, we know that $H$ is a normal subgroup of~$G$, so the condition~\pref{SuperSolvPf-phi(hg)} will still hold, possibly with a different choice of~$\alpha$.) Also note that 
	\begin{align} \label{SuperSolvPf-phi(h)=alpha(h)}
	 \text{$\varphi(h) = \varphi(h \cdot \1) = \alpha(h) \, \varphi(\1) = \alpha(h) \cdot \1 = \alpha(h)$ \quad for all $h \in H$} 
	 . \end{align}

For $i = 1,2$, let 
	\[ S_i^* = \{\, t  \mid \text{$t$ is a nontrivial element of prime-power order in $\langle s \rangle$, for some $s \in S_i$} \,\} . \]
(Note that, since $S_i$ generates~$G$, the set $S_i^*$ also generates~$G$.) It is not difficult to see (by the argument in the proof of \cite[Lem.~6.11]{HKMM}) that $\varphi$ is a chromatic isomorphism from $\Cay(G; S_1^*)$ to $\Cay(G; S_2^*)$. Since we also have $\varphi(\1) = \1$, this implies
	\begin{align} \label{SuperSolvPf-phi(gt)in}
	\text{$\varphi(gt) \in \{\varphi(g) \, \varphi(t)^{\pm1}\}$ \ for all $g \in G$ and $t \in S_1^*$} 
	. \end{align}

To complete the proof, it will suffice to show that $\varphi$ is an automorphism of the group~$G$. To do this,
	\[ \text{we will show $\varphi(gt) = \varphi(g) \, \varphi(t)$ for all $g \in G$ and $t \in S_1^*$} . \]
To this end, first note that if $|t| = 2$, then $|\varphi(t)| = 2$ (by \cref{|phi(s)|}), so $\varphi(t) = \varphi(t)^{-1}$, and therefore the desired equality holds, by~\pref{SuperSolvPf-phi(gt)in}.  Also note that if $g \in H$, then (by~\pref{SuperSolvPf-phi(hg)} and~\pref{SuperSolvPf-phi(h)=alpha(h)}) we have
	\[ \varphi(gt) = \alpha(g) \, \varphi(t) = \varphi(g) \, \varphi(t) , \]
as desired.

Hence, we may assume $|t| \neq 2$ and $g \notin H$. 
Since $|t| \neq 2$, we know $|t|$ is odd (because it is a prime-power that divides $|G|$, and is not~$2$), so $t \in H$.
Also, since $g \notin H$, we know $|g|$ is even. Then $|\varphi(g)|$ is also even (by \cref{|phi(s)|}), so $\varphi(g) \notin H$. Since $|G : H| = 2$, this implies 
	\[ \text{there is some $h \in H$, such that $\varphi(g) = gh$} . \]
For $i \in 1,2$, and for each $a \in G_i$, let $\inner a \colon G_i \to G_i$ be the conjugation by~$a$, so 
	\[ \inner a(x) = a x a^{-1} . \]
To reduce the number of parentheses in the remainder of the argument, we will write $a \, b (t)$ for $a\bigl( b(t) \bigr)$. Then we have
	\[ \varphi(gt)
		= \varphi \bigl( \inner g(t) \cdot g \bigr)
		=  \alpha \, \inner g(t)  \cdot \varphi(g) 
		=  \varphi(g) \cdot \inner{\varphi(g)^{-1}} \,  \alpha  \, \inner g(t)  
		=  \varphi(g) \cdot \inner{h^{-1}}\,  \inner{g^{-1}} \, \alpha \, \inner g(t)  
		. \]
Now, we use the assumption that $H$ is characteristically supersolvable: there is a series
	\[ \{\1\} = H_0 \subseteq H_1 \subseteq \cdots \subseteq H_r = H  \]
of characteristic subgroups of~$H$, such that
	\[ \text{$H_k/H_{k-1}$ is cyclic for $k = 1,2,\ldots, r$} . \]
Note that, for each~$k \in \{1,\ldots,r\}$, the subgroups $H_k$ and~$H_{k-1}$ are invariant under every automorphism of~$H$, so  every automorphism of~$H$ induces an automorphism of $H_k/H_{k-1}$.

Let $k \in \{1,\ldots,r\}$ be minimal, such that $t \in H_k$. (So $t H_{k-1}$ is a nontrivial element of $H_k/H_{k-1}$. Then $\alpha(t) H_{k-1}$ is also a nontrivial element of $H_k/H_{k-1}$, because $\alpha$ induces an automorphism of the quotient.) Since $H_k/H_{k-1}$ is cyclic, its automorphism group is abelian, so
	\[ \inner{h^{-1}} \, \inner{g^{-1}} \, \alpha \, \inner g(t)   \equiv \inner{h^{-1}} \, \alpha(t) \pmod{H_{k-1}} . \]
Also note that, since $|h|$ is odd and $\alpha(t)$ represents a nontrivial element of $H_k/H_{k-1}$, we have
	\[ \inner{h^{-1}} \, \alpha(t) \not\equiv \alpha(t)^{-1} \pmod{H_{k-1}} . \]
Furthermore, recall that $\alpha(t) = \varphi(t)$ (by~\pref{SuperSolvPf-phi(h)=alpha(h)}). Putting all of this together, we have
	\[ \varphi(gt) \not\equiv \varphi(g) \, \varphi(t)^{-1} \pmod{H_{k-1}} , \]
so $\varphi(gt) \neq \varphi(g) \, \varphi(t)^{-1}$. We therefore conclude from~\pref{SuperSolvPf-phi(gt)in} that $\varphi(gt) = \varphi(g) \, \varphi(t)$.
\end{proof}

\begin{proof}[\bf Proof of \cref{SquareFreeAut}]
Since $|G|$ is square-free, it is not divisible by~$4$, so we know that $G$ has a 2-complement~$H$ \csee{Not4Comp}.
We may assume $|G|$ is even, for otherwise \cref{OddChromIso} applies; therefore $|G : H| = 2$. Now, since $|H|$ is a divisor of~$|G|$, we know that it is square-free, so $H$ is characteristically supersolvable \fullcsee{SuperEg}{squarefree}, so \cref{SuperSolv} applies.
\end{proof}

We will use the following two known results, but only in the special case where the order of the group is~$4$ or~$8$.

\begin{lem}[{\cite[Prop.~4.1]{HKMM}}] \label{abelian}
If $G$ is abelian, then every CCA Cayley graph of~$G$ is strongly CCA.
\end{lem}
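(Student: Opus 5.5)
The plan is to show that an arbitrary colour-permuting automorphism $\varphi$ of a CCA Cayley graph $\Cay(G;S)$, with $G$ abelian, normalizes the left-regular representation $\ltrans^G_G$, possibly after correcting $\varphi$ by an affine colour-preserving automorphism. Normalizing $\ltrans^G_G$ is enough. Suppose $\varphi \compose \ell_g \compose \varphi^{-1} = \ell_{\beta(g)}$ for all $g$. Then $\beta$ is a group automorphism and $\varphi(gx) = \beta(g)\,\varphi(x)$. So, after normalizing $\varphi(\1) = \1$ by a left translation, we get $\varphi = \beta$, and $\varphi$ is affine.

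\textbf{Step 1.} I will first describe the group $\caut$ of colour-preserving automorphisms of $\Cay(G;S)$. Since the graph is CCA, every element of $\caut$ has the form $x \mapsto a\,\tau(x)$ with $\tau \in \mathrm{Aut}(G)$ colour-preserving, so $\tau(s) \in \{s^{\pm1}\}$ for all $s \in S$. Because $S$ generates $G$, such a $\tau$ is determined by which generators it inverts, and $\tau^2 = 1$. Because $G$ is abelian, these $\tau$ commute with each other. Hence $\caut = \ltrans^G_G \rtimes T$, where $T$ is an elementary abelian $2$-group of automorphisms of $G$.

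\textbf{Step 2.} Next, since $\varphi$ permutes colours, conjugation by $\varphi$ maps $\caut$ onto itself. Therefore $R := \varphi \compose \ltrans^G_G \compose \varphi^{-1}$ is a subgroup of $\caut$. It is abelian, isomorphic to $G$, and acts regularly on the vertex set. Each element of $R$ has the form $\ell_a \compose \tau$ with $\tau \in T$, and we have the projection $R \to \caut/\ltrans^G_G \iso T$.

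\textbf{Step 3.} The remaining task, which I expect to be the main obstacle, is to show that $R$ is conjugate to $\ltrans^G_G$ by an element of $\caut$, or more generally by some affine colour-preserving map $\psi$. Then $\psi \compose \varphi$ normalizes $\ltrans^G_G$, so it is affine, and hence $\varphi$ is affine. The difficulty is that $\ltrans^G_G$ need not be characteristic in $\caut$; for instance, in generalized-dihedral situations there can be other abelian regular subgroups of index~$2$.

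My first attempt will use the commutativity of $G$ through the identity $(\ell_a\tau)^2 = \ell_{a\tau(a)}$, together with the requirement that $R$ is abelian and regular. The goal is to show that the elements of $R$ with nontrivial $T$-component force $\tau$ to act as inversion on a suitable subgroup. Then an explicit affine map, built from a colour-preserving $\tau' \in T$ and a translation, conjugates $R$ back to $\ltrans^G_G$.

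If this direct approach fails, I will fall back on composing $\varphi$ with a group automorphism $\alpha$ satisfying $\alpha(s) \in \{\pi(s)^{\pm1}\}$, where $\pi$ is the colour permutation of $\varphi$. Such an $\alpha$ would be read off from the images of $4$-cycles $s,t,s^{-1},t^{-1}$, which exist because $G$ is abelian. Then $\alpha^{-1} \compose \varphi$ is colour-preserving, hence affine by the CCA hypothesis, and so $\varphi$ is affine.
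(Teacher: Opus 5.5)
\textbf{There is a genuine gap.} The paper gives no proof of this lemma (it is quoted from HKMM), so your argument has to stand on its own, and Step~3 is left as a plan. Step~3 is the entire content of the lemma, and the mechanism you propose for it cannot work.

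Every affine map normalizes $\ltrans^G_G$; for instance $\tau \compose \ell_g \compose \tau^{-1} = \ell_{\tau(g)}$. So under CCA we have $\ltrans^G_G \trianglelefteq \caut$. If $\psi R \psi^{-1} = \ltrans^G_G$ with $\psi$ affine, then already $R = \ltrans^G_G$. No affine map built from some $\tau' \in T$ and a translation can move a different regular subgroup onto $\ltrans^G_G$. What you must show is that $R = \ltrans^G_G$ outright, i.e.\ that every element of $R$ has trivial $T$-component, and that is just a restatement of the lemma.

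The properties of $R$ you plan to use (abelian, regular, $\cong G$, $(\ell_a\tau)^2 = \ell_{a\tau(a)}$) do not force $R = \ltrans^G_G$ inside $\ltrans^G_G \rtimes T$. Take $G = \ZZ_4 \times \ZZ_2 = \langle a\rangle \times \langle c\rangle$ and $S = \{\pm a, \pm(a+c)\}$. The automorphism $\tau$ with $\tau(a)=a$ and $\tau(c)=c+2a$ is colour-preserving. Then $\{\ell_n : n \in \langle a\rangle\} \cup \{\ell_y \compose \tau : y \notin \langle a\rangle\}$ is an abelian regular subgroup of $\ltrans^G_G \rtimes T$, isomorphic to $G$, and different from $\ltrans^G_G$. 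That graph is not CCA: adding $2a$ on $\{a,3a\}$ and fixing every other vertex is a non-affine colour-preserving automorphism. So CCA has to be used a second time, not only through $\caut = \ltrans^G_G \rtimes T$.

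Your fallback has the same hole. You give no reason why $s \mapsto \varphi(s)$ extends to a homomorphism, and 4-cycles only yield sign relations. Those relations are ambiguous exactly when $2\varphi(s) = 2\varphi(t)$ with $|\varphi(s)| = |\varphi(t)| = 4$, which is the configuration above, yet the fallback never uses CCA.

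\textbf{How your framework can be completed.} Write $G$ additively and assume $\varphi(0)=0$, so $\varphi(S)=S$.
\begin{itemize}
\item Evaluate $\varphi \compose \ell_g \compose \varphi^{-1} = \ell_{\varphi(g)} \compose \tau_g$ (with $\tau_g \in T$) to get $\varphi(g+x) = \varphi(g) + \tau_g(\varphi(x))$. Since $g \mapsto \tau_g$ is a homomorphism, it suffices to show $\tau_s = 1$ for $s \in S$.
\item Taking $g=x=s$ shows that $\tau_s$ fixes $u=\varphi(s)$. Otherwise $\varphi(2s)=0$, so $|u|=|s|=2$ and $-u=u$.
\item Computing $\varphi(s+t)=\varphi(t+s)$ both ways gives $(1-\tau_s)\varphi(t) = (1-\tau_t)\varphi(s)$.
\item Suppose $\tau_s$ inverts some $v=\varphi(t)\neq -v$. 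Then $\tau_t$ inverts $u$, and $2u=2v=:z$. Also $|u|=|v|=4$, by applying $\tau_s$ to the involution $u-v$.
\item By the identity in the third item, every $w\in S$ moved by $\tau_s$ or by $\tau_t$ satisfies $2w=z$. Hence both fix $A_z=\langle \{w\in S : 2w\neq z\}\cup\{z\}\rangle$ pointwise.
\item Now use CCA again. Adding $z$ on one coset of $A_z$ not containing $0$, and fixing all other vertices, is colour-preserving: edges leaving the coset have colours $w$ with $2w=z$, so $w\pm z=-w$. This map is not affine when $|G:A_z|\ge 4$. Hence $|G:A_z|=2$, because $\tau_s\neq 1$ fixes $A_z$ pointwise.
\item With $|G:A_z|=2$, there is only one nontrivial automorphism that fixes $A_z$ pointwise and acts as $\pm1$ on $S$. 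So $\tau_s=\tau_t$, which contradicts $\tau_s$ fixing $u$ while $\tau_t$ inverts it.
\end{itemize}
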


\begin{lem}[cf.\ {\cite[Prop.~5.6]{HKMM}}] \label{dihedral}
If $G$ is a dihedral group, and $|G| \not\equiv 2 \pmod{4}$, then every connected Cayley graph of~$G$ is strongly CCA.
\end{lem}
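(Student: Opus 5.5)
We would first reduce to $\varphi(\1) = \1$. Write $G = D_{2n} = \langle a, b \rangle$ with $n$ even, let $\varphi$ be a colour-permuting automorphism of a connected $\Cay(G;S)$, and precompose with a left translation so that $\varphi(\1) = \1$. Both claims concern automorphisms of one fixed $\Cay(G;S)$: a colour-preserving automorphism fixing~$\1$ is then a candidate group automorphism, so it suffices to show that every colour-permuting automorphism fixing~$\1$ is a group automorphism. The cases $n \le 2$ give $|G| \in \{2,4\}$. There $G$ is $\ZZ_2$ or $\ZZ_2 \oplus \ZZ_2$, and every element of~$S$ is an involution. Since $\varphi(gs) = \varphi(g)\varphi(s)$ for every involution $s \in S$, an induction on word length shows $\varphi$ is a homomorphism. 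So assume $n \ge 4$. As in the proof of \cref{SuperSolv}, we pass from $S$ to $S^*$, the nontrivial prime-power-order elements of the cyclic subgroups $\langle s \rangle$, $s \in S$. Then $\varphi$ remains chromatic, and \pref{SuperSolvPf-phi(gt)in} gives
\[ \varphi(gt) \in \{\varphi(g)\,\varphi(t)^{\pm1}\} \quad \text{for all } g \in G,\ t \in S^* . \]

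The goal is to show $\varphi(gt) = \varphi(g)\,\varphi(t)$ for all $g \in G$ and $t \in S^*$; an induction on word length then makes $\varphi$ an automorphism. If $|t| = 2$ this is automatic, by \cref{|phi(s)|}. The remaining elements of~$S^*$ are rotations $t = a^k$ of order $p^e > 2$. By \cref{|phi(s)|}, $\varphi(t)$ also has order $p^e > 2$, so it is again a rotation. Since $\varphi$ maps the $\{t^{\pm1}\}$-coloured edges onto the $\{\varphi(t)^{\pm1}\}$-coloured edges, it maps the $\langle t \rangle$-cosets (the $t$-coloured cycles) onto the $\langle \varphi(t) \rangle$-cosets. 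On each cycle, $\varphi$ either preserves or reverses the orientation. Call the corresponding sign $\epsilon_t(C) \in \{\pm1\}$ for each left coset~$C$ of $\langle t \rangle$.

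The main obstacle is showing that these signs are coherent across all cosets. Since the left cosets of $\langle t \rangle$ lying in~$A = \langle a \rangle$ are oriented by right multiplication by~$t$ in the same direction as those in~$bA$ only up to the inversion $b t b = t^{-1}$, the correct requirement is that $\varphi(gt) = \varphi(g)\varphi(t)$ for all~$g$, with no dependence on the coset of~$g$. We would compare adjacent cosets via short closed walks. For each other generator $u \in S^*$, the cosets $g\langle t \rangle$ and $gu\langle t\rangle$ are joined by $u$-edges, and the relation $u t u^{-1} = t^{\pm1}$ (dihedral groups: rotations commute, and reflections invert rotations) gives a $4$-cycle $g,\ gt,\ gtu,\ gu$ with colour sequence $t, u, t^{\mp1}, u^{-1}$. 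Its image is a closed walk from~$\varphi(g)$ with colours $\varphi(t), \varphi(u), \varphi(t), \varphi(u)$, with signs chosen so that it closes up. The signs must therefore satisfy the corresponding relation between $\varphi(t)$ and $\varphi(u)$ in~$G$. The key case is when $u$ is a reflection and $\varphi(u)$ is a rotation, or vice versa, which can happen only for $u = a^{n/2}$ or its image, since this is the only rotation of order~$2$. For such a mismatched pair, one must show the sign still propagates consistently. Here we expect to use that $n$ is even, so that $a^{n/2}$ is central and lies in $\langle a^2, b \rangle$-type index-$2$ subgroups, and that the three index-$2$ subgroups of~$G$ must be permuted compatibly by~$\varphi$.

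Once the propagation holds, connectivity of $\Cay(G;S^*)$ gives a single global sign $\epsilon_t$ for each rotation generator~$t$. Replacing $\varphi(t)$ by $\varphi(t)^{\epsilon_t}$ in the bookkeeping, with $\varphi(\1) = \1$ forcing $\epsilon_t = +1$ on the coset of~$\1$, then gives $\varphi(gt) = \varphi(g)\varphi(t)$ for all~$g$. Hence $\varphi$ is a group automorphism, and the graph is strongly CCA. The step we would attack first, and expect to be delicate, is the sign-propagation through the mismatched involution $a^{n/2}$. The hypothesis $|G| \not\equiv 2 \pmod 4$ must enter there, because \cref{NotSameGroup} shows that the analogous statement fails between $\ZZ_n \oplus \ZZ_2$ and $D_{2n}$.
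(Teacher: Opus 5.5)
\textbf{There is a genuine gap, exactly at the step you postpone, and it cannot be filled as planned.} Across a mismatched edge the orientation sign is forced to \emph{flip}. If $u\in S^*$ is a reflection and $\varphi(u)=a^{n/2}$, then $gut=gt^{-1}u$ gives
\[ \varphi(gut)=\varphi(g)\,\varphi(t)^{-\epsilon_t(g)}\,a^{n/2}=\varphi(gu)\,\varphi(t)^{-\epsilon_t(g)} , \]
so $\epsilon_t(gu)=-\epsilon_t(g)$. The same happens if $u=a^{n/2}$ and $\varphi(u)$ is a reflection. So one mismatched pair, together with any rotation of order greater than~$2$ in~$S^*$, already gives $\varphi(ut)=\varphi(u)\varphi(t)^{-1}\ne\varphi(u)\varphi(t)$.

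What you actually need is that mismatched pairs never occur, i.e.\ $\varphi(\langle a\rangle)=\langle a\rangle$. Knowing only that $n$ is even does not give this. Take $G=D_{12}=\langle a^2,b\rangle\times\langle a^3\rangle\iso D_6\times\ZZ_2$ and $S=\{a^{\pm2},b,a^3\}$. Then $\Cay(G;S)\iso C_3\cartprod P_2\cartprod P_2$. The map $a^{2k}b^ja^{3l}\mapsto a^{2k}b^la^{3j}$ swaps the two $P_2$ factors, so it is a colour-permuting automorphism fixing~$\1$. It sends the reflection $b$ to the central element $a^3$, so it is not an automorphism, hence not affine. In this example $a^3$ is central and $\varphi$ does permute the index-$2$ subgroups, yet it carries $\langle a^2,b\rangle$ onto $\langle a\rangle$. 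So the two tools you planned to use do not suffice. This is the paper's own example from the proof of \cref{CCAiffStrong} with $n=12$. Hence the statement as written, assuming only $|G|\not\equiv 2\pmod 4$, is false. For even $n$ one needs $4\mid n$; this still covers the paper's only use, $|G|=8$.

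\textbf{For $4\mid n$, the missing idea is global rather than local.}

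Step 1. Since $\varphi^{-1}$ maps closed walks to closed walks, and every closed walk has an even number of reflection-coloured edges, declaring the colours $\{\varphi(s)^{\pm1}\}$ with $s$ a reflection to be ``odd'' defines a homomorphism $G\to\ZZ_2$. Its kernel is $K=\varphi(\langle a\rangle)$.

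Step 2. Suppose $K\ne\langle a\rangle$, so $K=\langle a^2,r\rangle$ for a reflection~$r$. A rotation of prime-power order outside $\langle a^2\rangle$ has order $2^f\ge4$, where $2^f$ is the $2$-part of~$n$. Such an element of~$S^*$ would lie outside~$K$, so it would be $\varphi(s)^{\pm1}$ for a reflection~$s$. That contradicts order preservation. Hence $S^*\cap\langle a\rangle\subseteq\langle a^2\rangle$.

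Step 3. Generation then forces $S^*$ to contain reflections from both classes $b\langle a^2\rangle$ and $ab\langle a^2\rangle$. Every reflection of~$S^*$ is sent to an involution outside~$K$. Since $a^{n/2}\in\langle a^2\rangle\subseteq K$, that involution must be a reflection in the single class outside~$K$. This contradicts the fact that $\varphi$ permutes the colours of~$S^*$. So $\varphi(\langle a\rangle)=\langle a\rangle$, and this is where $4\mid n$ enters.

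\textbf{A second gap remains even then.} Your $4$-cycle test does not rule out a simultaneous sign change across edges of two rotations $t,t'$ with $\varphi(t')=\varphi(t)^{\pm1}a^{n/2}$. This can happen when $|\varphi(t)|\ge 8$ is a power of~$2$, because then $\varphi(t)^2=\varphi(t')^{\pm2}$. That case needs its own argument.
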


\begin{proof}[\bf Proof of \cref{CCAiffStrong}]
($\ref{CCAiffStrong-colperm} \Rightarrow \ref{CCAiffStrong-cca}$)
This is easy.
Let $\varphi$ be a colour-permuting automorphism of a connected Cayley graph of order~$n$ that is CCA. By assumption, $\varphi$~is the composition of a colour-preserving automorphism and a group isomorphism. However, since the Cayley graph is CCA, every colour-preserving automorphism is affine. Therefore, $\varphi$ is the composition of two affine maps, and is therefore affine.
Since $\varphi$ is an arbitrary colour-permuting automorphism, we conclude that the Cayley graph is strongly CCA.

\medbreak

($\ref{CCAiffStrong-n} \Rightarrow \ref{CCAiffStrong-colperm}$)
The desired conclusion is immediate from \cref{OddChromIso,SquareFreeAut} if $n$ is either odd or square-free. Also, it is easy to see that every Cayley graph of order~$4$ is strongly CCA (or apply \cref{abelian}).

So we only need to consider the case where $n = 8$: let $\Cay(G; S)$ be a connected Cayley graph of order~$8$ that is CCA. If $G$ is either abelian or dihedral, then we know from \cref{abelian,dihedral} that $\Cay(G; S)$ is strongly CCA.  

So we may assume that $G$ is the quaternion group of order~$8$.  Then the automorphism group of~$G$ acts as the full symmetric group on the three colours of edges coming from the elements $\{\pm i, \pm j, \pm k\}$ of order~$4$ (and there is only one other edge-colour $\{-1\}$, which must therefore be fixed by any colour-permuting automorphism), so any colour-permuting automorphism of $\Cay(G; S)$ can be composed with a group automorphism to become colour-preserving.

\medbreak

($\ref{CCAiffStrong-cca} \Rightarrow \ref{CCAiffStrong-n}$)
We prove the contrapositive: assume $n$ is even, $n$ is not square-free, and $n \notin \{4, 8\}$. 

Suppose, for the moment, that $|G|$ is divisible by~$4$. Let $G = D_{n/2} \times \ZZ_2$, and let $S = \{a^{\pm1},b,c\}$, where $\{a,b\}$ is the natural generating set of $D_{n/2}$ (from \cref{NotSameGroup}) and $c$ is a generator of the $\ZZ_2$ factor. Then $\Cay(G; S)$ is chromatically isomorphic to the Cartesian product $C_{n/4} \cartprod P_2 \cartprod P_2$, where .$C_{n/4}$ is coloured $\{a^{\pm1}\}$, and the two $P_2$~factors are coloured $\{b\}$ and~$\{c\}$, respectively.
It is easy to see that this is CCA, because any colour-preserving automorphism must preserve each factor of the Cartesian product.
However, it is not strongly CCA, because there is a colour-permuting automorphism that interchanges the two $P_2$~factors (and hence interchanges the colours $\{b\}$ and~$\{c\}$), but the element~$c$ is in the centre of~$G$, whereas $b$ is not, so no group automorphism can interchange $b$ and~$c$.

We may now assume $n/2$ is odd. Then, since $n$ is not square-free, it must be divisible by the square of some odd prime~$p$. So we can write $n = 2 p^{\ell+1} m$, where $\gcd(2p, m) = 1$ and $\ell \ge 1$. Let 
	\[ G = D_{2p} \times (\ZZ_p)^\ell \times \ZZ_m , \]
and let $S$ be the generating set of~$G$ that combines a generator of each cyclic factor with the natural generating set $\{a,b\}$ of~$D_{2p}$ that appears in \cref{NotSameGroup}. Then $\Cay(G; S^{\pm1})$ is chromatically isomorphic to the Cartesian product $P_2 \cartprod (C_p)^{\ell+1} \cartprod C_m$, where $P_2$ is coloured $\{b\}$, the first $C_p$~factor is coloured~$\{a^|pm1\}$, and the other $C_p$~factors each have their own colour. As in the preceding paragraph, it is easy to see that this is CCA (because any colour-preserving automorphism must preserve each factor of the Cartesian product). On the other hand, it is not strongly CCA, because a colour-permuting automorphism can permute the factors of the Cartesian product, but each element of order~$p$ in a $\ZZ_p$ factor of~$G$ is central, whereas the element~$a$ of order~$p$ in $D_{2p}$ is not central, so no group automorphism can interchange the two corresponding $C_p$ factors of the Cartesian product.
\end{proof}

\end{document}